\newtheorem{theorem}{Theorem}[section]
\newtheorem{lemma}[theorem]{Lemma}
\newtheorem*{theoremA}{Theorem A}
\def\HH{\mbox{$\mathbb H$}}
\def\CC{\mbox{$\mathbb C$}}
\def\ZZ{\mbox{$\mathbb Z$}}
\def\OOO{{\mathcal O}}
\def\wt{\widetilde}
\def\cbar{\widehat{\CC}}
\def\smm{{\backslash}}
\def\m{\text{mod }}
\title{Wandering continua for rational maps
\footnote{2010 Mathematics Subject Classification: 37F10, 37F20}}
\author{Guizhen Cui \thanks{supported by NSFC grant no. 11125106} and Yan Gao}
\date{\today}
\begin{document}
\maketitle
\begin{abstract}
We prove that a Latt\`{e}s map admits an always full wandering continuum if and only if it is flexible. The full wandering continuum is a line segment in a bi-infinite or one-side-infinite geodesic under the flat metric.
\end{abstract}

\section{Introduction}

Let $f$ be a rational map of the Riemann sphere $\cbar$ with $\deg f\ge 2$. Denote by $J_f$ and $F_f$ the Julia set and the Fatou set of $f$ respectively. One may refer to \cite{Mi1} for their definitions and basic properties. By a {\bf{continuum}} we mean a connected compact set consisting of more than one point. A continuum $K\subset\cbar$ is called a {\bf wandering continuum} for $f$ if $K\subset J_f$ and $f^n(K)\cap f^m(K)=\emptyset$ for any $n>m\geq 0$.

The existence of wandering continua for polynomials has been studied by many authors. It was proved that all wandering components of the Julia set of a polynomial with disconnected Julia set are points \cite{BH, KS, QY}. For polynomials with connected Julia sets, it was proved that a polynomial without irrational indifferent periodic cycles has no wandering continuum if and only if the Julia set is locally connected \cite{BL, K1, K2, Le, Th}.

The situation for non-polynomial rational maps is different. There are hyperbolic rational maps which have non-degenerate wandering components of their Julia sets. The first example was given by McMullen, where the wandering Julia components are Jordan curves \cite{Mc2}. In fact, it was proved that for a geometrically finite rational map, a wandering component of its Julia set is either a Jordan curve or a single point \cite{PT}.

In this work we study wandering continua for rational maps with connected Julia sets. A continuum $K\subset\cbar$ is called {\bf full} if $\cbar\smm K$ is connected. A wandering continuum $K$ for a rational map $f$ is {\bf always full} if $f^n(K)$ is full for all $n\ge 0$. Refer to \cite{CPT} for the the following theorem and the definition of Cantor multicurves.

\begin{theoremA}
Let $f$ be a post-critically finite rational map and $K\subset J_f$ be a wandering continuum. Then either $K$ is always full or there exists an integer $N\ge 0$ such that $f^n(K)$ is a Jordan curve for $n\ge N$. The latter case happens if and only if $f$ has a Cantor multicurve.
\end{theoremA}

\noindent{\bf Problem}: Under what condition does a post-critically finite rational map $f$ admit an always full wandering continuum?

\vskip 0.24cm

In this paper, we solve this problem for Latt\`{e}s maps (refer to \S2 for its definition). Here is the main theorem:

\begin{theorem}\label{continuum}
A Latt\`{e}s map $f$ admits an always full wandering continuum if and only if it is flexible. In this case the wandering continuum is a line segment in an infinite geodesic under the flat metric.
\end{theorem}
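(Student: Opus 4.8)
The plan is to work through the torus presentation $\Theta\colon \TT=\CC/\La\to\cbar$ of the Latt\`es map, where $\Theta$ is the finite quotient by the orbifold deck group $G$ (generated by the translations in $\La$ and by $z\mapsto -z$) and $f$ is covered by an affine map $L(z)=mz+b$ with $|m|>1$. The flat orbifold metric pulled back from $|dz|$ is expanded exactly by $|m|$ under $f$, and $f$ is \emph{flexible} precisely when $m\in\ZZ$, i.e. when $m$ is real. Since $J_f=\cbar$, every continuum lies in the Julia set, so a wandering continuum is just a nondegenerate continuum whose forward orbit is pairwise disjoint. By Theorem A, an eventually simply-connected wandering continuum becomes, after replacing $E$ by some $f^N(E)$, simply-connected for every $n\ge 0$; and since the $E_n:=f^n(E)$ are disjoint while the postcritical (cone) set $P$ is finite, I may assume in addition that every $E_n$ avoids $P$.

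First I would prove a structure statement valid for \emph{any} Latt\`es map: such an $E$ is a geodesic segment of the flat metric. Because $E_n$ is a non-separating continuum in $\cbar$ disjoint from $P$, its inclusion is $\pi_1$-trivial, so a component $\wt E_n$ of $\Theta^{-1}(E_n)\subset\TT$ maps homeomorphically onto $E_n$; in particular $\wt E_n$ meets each $\La$-orbit at most once, so the difference set $\wt E_n-\wt E_n$ avoids $\La\smm\{0\}$. Lifting to $\CC$ and using $\wt E_n=m^n\wt E+c_n$ gives $\wt E_n-\wt E_n=m^n(\wt E-\wt E)$. Now if $\wt E$ were not contained in a straight line, then $\wt E-\wt E$ would have nonempty interior, so $m^n(\wt E-\wt E)$ would contain balls of radius $|m|^n r_0\to\infty$ and hence a nonzero vector of $\La$ once $|m|^n r_0$ exceeds the covering radius of $\La$ — contradicting the previous sentence. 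Therefore each $\wt E_n$, and so $E_n$, is a flat geodesic segment; moreover the same injectivity forces an irrational direction (a segment longer than the period in a rational direction would contain two points differing by a lattice vector), so $E$ lies in a bi-infinite geodesic, as asserted.

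Granting the structure statement, the two implications are quick. For \emph{only if}, note that $\wt E_n$ is a segment of direction $\arg v+n\arg m$ and length $|m|^n|\wt E|\to\infty$. If $f$ were not flexible then $\theta:=\arg m\notin\pi\ZZ$, so the line-directions of $\wt E_n$ and $\wt E_{n+1}$ differ by the fixed nonzero angle $\theta\ (\mathrm{mod}\ \pi)$; two geodesic segments on the compact flat torus whose directions differ and whose lengths tend to infinity must cross, forcing $\wt E_n\cap\wt E_{n+1}\ne\emptyset$ and hence $E_n\cap E_{n+1}\ne\emptyset$, against wandering. Thus $m$ is real and $f$ is flexible. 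For \emph{if}, take $m\in\ZZ$; then $L$ preserves every direction, so all the lines $\ell_n:=m^n\ell+c_n$ are parallel. I would build $E$ by choosing $\ell$ with an irrational direction $v$ and a generic base point: the countably many coincidences that would make some $\Theta(\ell_n)$ non-injective, or would make $\Theta(\ell_n)$ meet $\Theta(\ell_k)$ for $n\ne k$, each exclude only a measure-zero set of data, so a Baire/measure argument supplies admissible $(v,\text{base point})$. For such data the bi-infinite geodesics $\gamma_n:=\Theta(\ell_n)$ are pairwise disjoint embedded arcs, and taking $E$ to be a closed subsegment of $\gamma_0$ gives $f^n(E)\subset\gamma_n$; hence the $f^n(E)$ are pairwise disjoint simply-connected arcs.

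The technical heart — and the step I expect to cause the most trouble — is the structure statement, on two points. First, the planar lemma that a continuum not contained in a line has a difference set with nonempty interior must be handled carefully for possibly non-locally-connected continua: not lying in a line means every directional projection is a nondegenerate interval, and one must upgrade this to genuine interior of $\wt E-\wt E$. Second, the lifting claim requires dealing with the branched nature of $\Theta$ near the cone points, i.e. that a non-separating continuum disjoint from $P$ really admits a homeomorphic lift through the orbifold cover; I would justify this by Alexander duality ($\check H^1(E)=0$) together with triviality of the covering over such $E$. Once these are in place, the rotation/crossing dichotomy cleanly separates the flexible case from the rigid one.
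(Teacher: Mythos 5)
Your reduction to the torus hard-codes the orbifold deck group as generated by $\Lambda$ and $z\mapsto -z$, i.e.\ the signature $(2,2,2,2)$, and with it the claim that $f$ is flexible precisely when the derivative $m$ is an integer. That equivalence is false for Latt\`es maps in general, and this is the genuine gap. Flexibility requires signature $(2,2,2,2)$ \emph{and} integer derivative, but Latt\`es maps with signatures $(3,3,3)$, $(2,4,4)$, $(2,3,6)$ --- where $G$ is generated by a rotation of order $3$, $4$ or $6$ --- can also have integer derivative: for instance $A(z)=2z$ on $\CC/\ZZ[i]$ commutes with $\rho(z)=iz$, hence descends to a degree-$4$ Latt\`es map of signature $(2,4,4)$ with $m=2\in\ZZ$ that is \emph{not} flexible. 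Your ``only if'' direction proves only that $m$ is real and then invokes the false equivalence, so such maps are never excluded; and your ``if'' direction actually breaks on them: since $\Theta(x)=\Theta(y)$ iff $y=\rho^j(x)$ and $\rho$ rotates directions by $\pi/2$, the line $\wt\rho(\ell_k)$ is never parallel to $\ell_n$ and so meets it in $\CC$, whence $\Theta(\pi(\ell_n))$ meets $\Theta(\pi(\ell_k))$ for \emph{every} choice of direction and basepoint --- the coincidences you hope to avoid are not measure zero, they are unavoidable. The paper inserts a separate argument exactly here: having obtained $a\in\ZZ$ from the torus theorem, it rules out $\#P_f=3$ by taking lifts $I_n$, $J_n$ of the two preimage components $E_n$ and $\rho(E_n)$ of $f^n(K)$ with midpoints in a fundamental domain; these segments make a fixed nonzero angle $2\pi/\nu$ ($\nu=3,4,6$) while their lengths $|a|^n|I_0|$ tend to infinity, so they must cross, which is impossible. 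Conversely, for flexible maps the paper exploits the two features special to $(2,2,2,2)$ --- $\rho$ preserves slopes and $\rho^2=\mathrm{id}$ --- to show that the geodesics $\Theta(A^n(\pi(L)))$ coming from \emph{any} wandering line are pairwise disjoint (if two met they would coincide, forcing $A^{n+p}(\pi(L))=\rho(A^n(\pi(L)))$ and then $A^{n+2p}(\pi(L))=A^{n+p}(\pi(L))$, contradicting wandering), so no genericity argument is needed. To repair your proof you must add the case analysis over all four parabolic signatures.

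Separately, your structure step rests on the unproven planar lemma that a continuum $\wt E$ not contained in a line has $\wt E-\wt E$ with nonempty interior. You flag this, but the route you sketch cannot suffice as stated: nondegeneracy of every directional projection does not imply nonempty interior (a circle has all projections equal to $[-1,1]$ and empty interior), and the fibers of $\wt E-\wt E$ over a fixed first coordinate are in general disconnected, so one cannot argue slice by slice; for possibly non-locally-connected continua this interior-of-difference-set statement is a substantial theorem in its own right, not a remark. The paper avoids it entirely with an elementary dichotomy (Lemmas 3.2 and 3.3): either every chord of the lift $B$ lies in $B$, so $B$ is convex and, not being a segment, has positive area --- impossible since the disjoint images $A^n(E)$ would have area $|a|^{2n}$ times that of $E$ on a finite-area torus --- or some line $L$ has a bounded component of $L\smm B$; then, since the backward lattices $\Gamma_n=\alpha^{-n}(\Gamma_0)$ become dense, one finds two points of $\Gamma_n$ in a bounded complementary domain whose images under $\alpha^n$ are congruent modulo $\Lambda$ and lie in a bounded complementary interval of the line $\alpha^n(L_n)$ relative to $\alpha^n(B)$, contradicting the injectivity statement of Lemma 3.2. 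If you keep your route, proving the difference-set lemma is a real obligation; otherwise you should replace that step by an argument of the paper's kind.
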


\section{Latt\`{e}s maps}

This section is a review about Latt\`{e}s maps. Refer to \cite{Mc1, Mi1, Mi2} for details. Let $f:\cbar\to\cbar$ be a rational map with $\deg f\ge 2$. Denote by $\deg_z f$ the local degree of $f$ at a point $z\in\cbar$,
$$
\Omega_f=\{z:\, \deg_z f>1\}.
$$
the critical set and
$$
P_f=\overline{\bigcup_{n>0}f^n(\Omega_f)}
$$
the post-critical set of $f$. The rational map $f$ is called {\bf post-critically finite} if $\# P_f<\infty$.

Let $f$ be a post-critically finite rational map. Define $\nu_f(z)$ for each point $z\in\cbar$ to be the least common multiple of the local degrees $\deg_y f^n$ for all $n>0$ and $y\in\cbar$ with $f^n(y)=z$. By convention $\nu_f(z)=\infty$ if the point $z$ is contained in a super-attracting cycle. The {\bf orbiford} of $f$ is defined by $\OOO_f=(\cbar,\nu_f)$. Note that $\nu_f(z)>1$ if and only if $z\in P_f$. The {\bf signature} of the orbifold $\OOO_f$ is the list of the values of $\nu_f$ restricted to $P_f$. The Euler Characteristic of $\OOO_f$ is given by
$$
\chi(\OOO_f)=2-\sum_{z\in\cbar}\left(1-\frac{1}{\nu_f(z)}\right).
$$
It turns out in \cite{Mc1} that $\chi(\OOO_f)\le 0$. The orbifold $\OOO_f$ is {\bf hyperbolic} if $\chi(\OOO_f)<0$, and {\bf parabolic} if $\chi(\OOO_f)=0$.
It is easy to check that the signature of a parabolic orbifold $\OOO_f$ can only be $(\infty, \infty)$, $(2,2,\infty)$, $(2,2,2,2)$, $(3,3,3)$, $(2,4,4)$ or $(2,3,6)$.

Suppose that the signature of $\OOO_f$ is $(\infty, \infty)$. Then $f$ is M\"{o}bius conjugate to a power map $z\mapsto z^d$ with $|d|\ge 2$. Suppose that the signature of $\OOO_f$ is $(2,2,\infty)$. Then  $f$ is M\"{o}bius conjugate to $\pm\Psi_d$, where $\Psi_d$ is the {\bf Chebyshev polynomial} of degree $d$ defined by the equation
$$
\Psi_d(z+\frac 1z)=z^d+\frac{1}{z^d}.
$$
Note that the Julia set of the map $\pm\Psi_d$ is the interval $[-2,2]$. Thus in both cases, there exist no wandering continua for $f$.

\vskip 0.24cm

A post-critically finite rational map $f$ with parabolic orbifold is called a {\bf Latt\`{e}s map} if $\nu_f(z)\neq\infty$ for any point $z\in\cbar$. Let $\nu(\OOO_f)=\max\{\nu_f(z):\, z\in\cbar\}$. Refer to \cite[Theorem 3.1]{Mi2} for the following theorem.

\begin{theorem}\label{lift}
Let $f$ be a Latt\`{e}s map. Then there exist a lattice $\Lambda=\{n+m\omega,\, n,m\in\ZZ\}$ ($\text{Im }\omega>0$), a finite holomorphic cover $\Theta: \CC/\Lambda\to\OOO_f$, a finite cyclic group $G$ of order $\nu(\OOO_f)$ generated by a conformal self-map $\rho$ of $\CC/\Lambda$ with fixed points, and an affine map $A(z)=az+b\,(\m\Lambda): \CC/\Lambda\to\CC/\Lambda$, such that
$$
\Theta(z_1)=\Theta(z_2)\Leftrightarrow z_1=\rho^n(z_2)\text{ for }n\in\ZZ,
$$
and the following diagram commutes:
$$
\begin{array}{ccc}
\CC/\Lambda & \overset{A}{\longrightarrow} & \CC/\Lambda  \\
\Theta \Big\downarrow & & \Big \downarrow \Theta  \\
\OOO_f & \overset{f}{\longrightarrow} & \OOO_f.
\end{array}
$$
\end{theorem}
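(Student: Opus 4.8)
The plan is to uniformize $\OOO_f$ by the plane, lift $f$ to an affine self-map of that plane, and then descend to an intermediate torus quotient. The first input is that $f$ is an \emph{orbifold covering} of $\OOO_f$ by itself. From the very definition of $\nu_f$ as an lcm of local degrees along backward orbits, one gets for every $z$ the divisibility $\deg_z f\cdot\nu_f(z)\,|\,\nu_f(f(z))$; because $f$ is a Latt\`es map the orbifold is parabolic, $\chi(\OOO_f)=0$, and orbifold Riemann--Hurwitz forces this divisibility to be an equality $\deg_z f\cdot\nu_f(z)=\nu_f(f(z))$. Hence $f:\OOO_f\to\OOO_f$ is an unbranched orbifold covering. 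The second input is the Euclidean classification already recorded in the excerpt: the signature is one of $(2,2,2,2)$, $(3,3,3)$, $(2,4,4)$, $(2,3,6)$, and the universal orbifold cover is a branched covering $\pi:\CC\to\OOO_f$ whose deck group $\G$ is a Euclidean crystallographic group. Such a $\G$ fits in a short exact sequence $1\to\La\to\G\to G\to 1$, where $\La$ is the normal finite-index subgroup of translations, a lattice, and the point group $G$ is cyclic of order $2,3,4,6$ respectively, generated by a rotation $\rho(z)=\zeta(z-p)+p$ with $\zeta$ a primitive root of unity of that order; this order is exactly $\nu(\OOO_f)$. I would then define $\Theta:\CC/\La\to\OOO_f$ as the quotient by the induced $G$-action. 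Since $\rho$ normalizes $\La$ it descends to a conformal automorphism of $\CC/\La$ fixing the image of $p$, the fibres of $\Theta$ are precisely the $G$-orbits, and this gives the stated equivalence $\Theta(z_1)=\Theta(z_2)\Leftrightarrow z_1=\rho^n(z_2)$.

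Next I would lift $f$. As $f$ is an orbifold covering and $\pi$ is an orbifold covering with simply connected total space, the composite $f\circ\pi:\CC\to\OOO_f$ is again an orbifold covering whose total space is simply connected and carries the trivial orbifold structure; hence it too is a universal orbifold covering of $\OOO_f$. By uniqueness of the universal cover there is a biholomorphism $\wt f:\CC\to\CC$ with $\pi\circ\wt f=f\circ\pi$, and a self-biholomorphism of $\CC$ is affine, so $\wt f(z)=az+b$ with $a\neq 0$. This lift is equivariant: $\wt f\circ\gamma=f_*(\gamma)\circ\wt f$ for all $\gamma\in\G$, where $f_*:\G\to\G$ is the induced injection of deck groups.

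Finally I would descend $\wt f$ to the torus. Applying equivariance to a translation $T_\la$ by $\la\in\La$ gives $\wt f(z+\la)=\wt f(z)+a\la$, so $f_*(T_\la)$ is the translation by $a\la$; being a translation in $\G$ it lies in the translation subgroup $\La$, whence $a\La\subseteq\La$. Therefore $\wt f$ carries $\La$-cosets to $\La$-cosets and descends to an affine map $A(z)=az+b\,(\m\La)$ on $\CC/\La$. Writing $\pi=\Theta\circ q$ with $q:\CC\to\CC/\La$ the quotient, the relations $\pi\circ\wt f=f\circ\pi$ and $q\circ\wt f=A\circ q$ give $\Theta\circ A\circ q=f\circ\Theta\circ q$, and surjectivity of $q$ yields $\Theta\circ A=f\circ\Theta$, the asserted commuting square. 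The main obstacle here is not this last descent, which is formal, but the two structural inputs feeding it: upgrading the automatic divisibility to the equality that makes $f$ a genuine orbifold covering (this is exactly where parabolicity is essential, and where a hyperbolic-orbifold map would fail), and invoking the Euclidean classification to realize $\OOO_f$ as $\CC$ modulo a crystallographic group of the stated cyclic point-group type. Once these are secured, the affineness of $\wt f$ is a clean consequence of the rigidity of $\CC$.
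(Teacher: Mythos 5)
Your proposal is correct, and since the paper does not prove this theorem itself --- it is quoted from Milnor (\cite{Mi1}, Theorem 3.1) --- the relevant comparison is with the standard proof in that reference, which your argument reproduces in every essential step: upgrading the automatic divisibility $\deg_z f\cdot\nu_f(z)\mid\nu_f(f(z))$ to equality via orbifold Riemann--Hurwitz and parabolicity, uniformizing $\OOO_f$ by $\CC$ with crystallographic deck group $\G$ whose translation lattice is $\La$ and whose point group is cyclic of order $\nu(\OOO_f)$, lifting $f$ to an affine map by uniqueness of the universal orbifold cover, and descending through $\CC/\La$ via $a\La\subseteq\La$. The only (trivial) omission is the normalization of the lattice to the form $\{n+m\omega,\ \mathrm{Im}\,\omega>0\}$ by a linear change of coordinate, so there is nothing to correct.
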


A Latt\`{e}s map $f$ is called {\bf flexible} if $\OOO_f$ has the signature $(2,2,2,2)$ and the affine map $A(z)=az+b\,(\m\Lambda): \CC/\Lambda\to\CC/\Lambda$ defined in Theorem \ref{lift} has an integer derivative $A'=a\in\ZZ$. A Latt\`{e}s map admits a non-trivial quasiconformal deformation if and only if it is flexible by the following discussion.

Let $f$ be a Latt\`{e}s map. If $\#P_f=3$ and $f$ is topologically conjugate to another rational map $g$, then $f$ and $g$ are M\"{o}bius conjugate.

Now we assume that $\#P_f=4$. Then the signature of $\OOO_f$ is $(2,2,2,2)$ and $\nu(\OOO_f)=2$. Let $\wt\rho:\CC\to\CC$ be a lift of the generator $\rho$ of $G$ under the natural projection $\pi: \CC\to\CC/\Lambda$. Let $z_0\in\CC$ be the unique fixed point of $\wt\rho$. Then $\wt\rho(z)=2z_0-z$. Denote by $Q\subset\CC/\Lambda$ the set of fixed points of $\rho$. Then $\#Q=4$ and $\Theta(Q)=P_f$. Therefore
\begin{equation}\label{1}
\pi^{-1}(Q)=\{n/2+m\omega/2+z_0,\, n,m\in\ZZ\}.
\end{equation}

Let $A(z)=az+b\, (\m\Lambda): \CC/\Lambda\to\CC/\Lambda$ be the affine map defined in Theorem \ref{lift}. Write $\alpha(z)=az+b$. Since $f(P_f)\subset P_f$, we have $\alpha(\pi^{-1}(Q))\subset\pi^{-1}(Q)$. Equivalently, there exist integers $(p,q,r,s)$ such that
\begin{equation}\label{2}
a=p+q\omega,\text{ and } a\omega=r+s\omega.
\end{equation}
It follows that
\begin{equation}\label{3}
q\omega^2+(p-s)\omega-r=0.
\end{equation}

If $a$ is a real number, then $q=r=0$ and $a=p=s$. Thus the real number $a$ must be an integer and equations (2) hold for any complex number $\omega$. This shows that one can make a quasiconformal deformation for the map $f$ to get another rational map such that they are not M\"{o}bius conjugate.

If $a$ is not real, then $q\neq 0$ and thus the complex number $\omega$ with $\text{Im }\omega>0$ is uniquely determined by the integers $(p,q,r, s)$ from equation (3). This shows that if the map $f$ is topologically conjugate to another rational map $g$, then $f$ and $g$ are M\"{o}bius conjugate.

\vskip 0.24cm

{\bf Remark}. A Latt\`{e}s map is flexible if and only if it has a Cantor multicurve. Therefore a Latt\`{e}s map admits a wandering Jordan curve if and only if it is flexible by Theorem A.

\section{Wandering continua for torus coverings}

Let $\Lambda=\{n+m\omega: n,m\in\ZZ\}$ ($\text{Im }\omega>0$) be a lattice. Then $\CC/\Lambda$ is a torus.  A continuum $E\subset\CC/\Lambda$ is {\bf full} if there exists a simply connected domain $U\subset\CC/\Lambda$ such that $E\subset U$ and $U\smm E$ is connected. Let $\pi:\, \CC\to\CC/\Lambda$ be the natural projection. If $E\subset\CC/\Lambda$ is a full continuum, then so is each component of $\pi^{-1}(E)$. In this section, we will prove the following theorem.

\begin{theorem}\label{torus}
Let $A(z)=az+b\,(\m\Lambda):\, \CC/\Lambda\to\CC/\Lambda$ be a covering of the torus with $\deg A\ge 2$. Then the map $A$ admits an always full wandering continuum $E$ if and only if its derivative $a$ is an integer. In this case, the wandering continuum $E$ is a line segment in an infinite geodesic under the flat metric of $\CC/\Lambda$.
\end{theorem}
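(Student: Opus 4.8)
The plan is to work in the universal cover $\pi:\CC\to\CC/\La$, writing $\al(z)=az+b$ for the affine lift of $A$, and to treat the two implications separately. For the ``if'' direction assume $a\in\ZZ$; then $|a|\ge 2$ because $\deg A=|a|^2\ge 2$. Let $z^*=b/(1-a)$ be the fixed point of $\al$ and choose $v\in\CC$ spanning an irrational direction, so that $\RR v\cap\La=\{0\}$ and $\pi$ is injective on $z^*+\RR v$; its image $\g$ is a bi-infinite geodesic with $A(\g)=\g$. Put $E=\pi(\{z^*+tv:c\le t\le d\})$ with $0<c<d<|a|c$. Because $\al^n(z^*+tv)=z^*+a^ntv$, the lift of $A^nE$ corresponds to the parameter interval $a^n[c,d]\subset\RR$, and the bound $d<|a|c$ makes these intervals pairwise disjoint; since $\RR v$ meets $\La$ only at $0$, the continua $A^nE$ are pairwise disjoint, so $E$ wanders. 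Each $A^nE$ is a simple geodesic arc, so it lies in a disk $U$ with $U\smm A^nE$ connected and is therefore simply-connected; thus $E$ is the desired segment of a geodesic.

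For the converse, suppose $A$ has an eventually simply-connected wandering continuum; replacing $E$ by a forward image, I may assume that $A^nE$ is simply-connected for every $n\ge 0$. Fix a lift $\tilde E_0$ of $E$ and set $\tilde E_n=\al^n(\tilde E_0)$, so that $A^nE=\pi(\tilde E_n)$ and $\tilde E_n=a^n\tilde E_0+c_n$ is a Euclidean similarity image of $\tilde E_0$, scaled by $|a|^n$ and rotated by $n\arg a$; in particular $\mathrm{diam}\,\tilde E_n=|a|^n\,\mathrm{diam}\,\tilde E_0\to\infty$. Simple-connectivity of $A^nE$ is equivalent to $\pi$ being injective on $\tilde E_n$, i.e. to $(\tilde E_n-\tilde E_n)\cap\La=\{0\}$, where $\tilde E_n-\tilde E_n=\{x-y:x,y\in\tilde E_n\}$. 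Applying this at level $n+1$ and using $\tilde E_{n+1}-\tilde E_{n+1}=a(\tilde E_n-\tilde E_n)$ yields the stronger $(\tilde E_n-\tilde E_n)\cap a^{-1}\La=\{0\}$, which means $A$ is injective on $A^nE$; consequently the areas of the disjoint sets $A^nE$ would grow by the factor $|a|^2$ at each step unless $\mathrm{area}(E)=0$.

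The heart of the proof is to show that $E$ is a geodesic segment. Set $\Delta=\tilde E_0-\tilde E_0$. The self-injectivity at every level reads $a^n\Delta\cap\La=\{0\}$, i.e. $\Delta\cap a^{-n}\La=\{0\}$ for all $n\ge 0$. Since $|a|>1$, the lattices $a^{-n}\La$ have covolume and covering radius tending to $0$, so their union is increasingly dense; hence $\Delta$ can contain no round disk, for a disk in the symmetric set $\Delta$ can be chosen to avoid $0$ and would then capture a nonzero point of some $a^{-n}\La$. The geometric lemma I must establish is that the difference set of a continuum has empty interior only when the continuum lies in a line: if $\tilde E_0$ contained three non-collinear points, joining them within $\tilde E_0$ produces two subcontinua spanning transverse directions, and their set of differences already has nonempty interior. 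Therefore $\tilde E_0$ lies on a line and $E$ is a segment of a geodesic. I expect this straightening step to be the main obstacle, as it is where the a priori uncontrolled shape of a general continuum must be tamed.

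It remains to force $a\in\ZZ$. Writing $\tilde E_0=\{w_0+tv:c\le t\le d\}$, each $\tilde E_n$ is a segment of direction $a^nv$, and disjointness of the consecutive images $A^mE,A^{m+1}E$ is equivalent to $(\tilde E_{m+1}-\tilde E_m)\cap\La=\emptyset$. The set $\{x-y:x\in\tilde E_{m+1},\,y\in\tilde E_m\}$ is, up to translation, the filled parallelogram with edge vectors $a^{m+1}(d-c)v$ and $a^{m}(d-c)v$, whose edge directions differ by the angle $\arg a$. If $a\notin\RR$ this parallelogram is non-degenerate with both side lengths tending to $\infty$, so it contains Euclidean disks of radius tending to $\infty$ and therefore meets $\La$ once $m$ is large --- contradicting the wandering property. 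Hence $a\in\RR$; and since $a\La\subseteq\La$ while the endomorphism ring of the torus contains no non-integer real number, $a\in\ZZ$. This establishes both implications together with the asserted geodesic-segment structure.
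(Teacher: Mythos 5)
The gap is the step you yourself flag as the main obstacle: the claim that a planar continuum whose difference set has empty interior must lie in a line. Your converse direction reduces the whole theorem to this statement, but your justification --- ``three non-collinear points produce two subcontinua spanning transverse directions, and their set of differences already has nonempty interior'' --- is circular: the assertion for two transverse subcontinua is the same kind of statement and is no easier. Nor does the obvious device (a continuum crossing a rectangle horizontally must meet one crossing it vertically) prove it: to apply that crossing lemma you need a horizontal crossing that is \emph{wider} than the vertical one together with a vertical crossing that is \emph{taller} than the horizontal one, and for fractal continua this pair need not exist in any frame; for instance, on the graph of a Weierstrass-type function whose oscillation on every interval of length $w$ is comparable to $w^{\al}$ with $\al<1$, every subcontinuum of horizontal extent $w$ has vertical extent comparable to $w^{\al}$, so a crossing that is both narrower and taller than another is impossible. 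Recall also that a continuum need not contain any arc nor be a graph over any direction (pseudo-arcs), so reductions to graphs or paths are unavailable. The lemma may well be true, but it is a substantive piece of plane topology that your proposal asserts rather than proves; as written, the converse direction is not established.

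This is exactly the point where the paper does its real work, by a route that avoids difference sets altogether. Lemma~\ref{topology} converts simple connectivity into the statement that $\pi$ is injective on every bounded component $I$ of $L\smm B$, for every line $L$ and every lift component $B$; Lemma~\ref{segment} then assumes $B$ is not a segment, produces a bounded pocket $U$ of $\CC\smm(L\cup B)$, and invokes the dynamics: the refined grids $\Gamma_n=\alpha^{-n}(\Gamma_0)$ become dense, so for large $n$ one finds two points of $\Gamma_n$ in $U$ spanning a chord with endpoints on $B$, and applying $\alpha^n$ (which maps $\Gamma_n$ onto $\Gamma_0$) yields two $\Lambda$-congruent points inside a bounded component of $\alpha^n(L_n)\smm\alpha^n(B)$, contradicting Lemma~\ref{topology} applied to $A^n(E)$. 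In short, the expansion supplies the forbidden configuration that you are asking an unproved geometric lemma to produce. Two side remarks: your claimed equivalence ``$A^nE$ is simply connected iff $\pi$ is injective on $\tilde E_n$'' is false in the backward direction (a small round circle on the torus lifts injectively but is not simply-connected in the paper's sense); you only use the forward direction, so this is a misstatement rather than a fatal error. Your ``if'' direction and your parallelogram argument forcing $a\in\RR$ and then $a\in\ZZ$ are correct, and essentially coincide with the paper's Lemma~\ref{flexible} and Lemma~\ref{non}.
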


The proof of Theorem \ref{torus} is based on the following lemmas.

\begin{lemma}\label{topology}
Let $E\subset\CC/\Lambda$ be a full continuum. For any line $L\subset\CC$ and any connected component $B$ of $\pi^{-1}(E)$, if $I$ is a bounded component of $L\smm B$, then $\pi$ is injective on $I$.
\end{lemma}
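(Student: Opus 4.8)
The plan is to prove the contrapositive in a quantitative form. Parametrising $L$ as a copy of $\RR$, I will show that $I$ contains no two distinct points differing by an element of $\La$, which is exactly the injectivity of $\pi$ on $I$. Suppose not; then there are $x,y\in I$ with $x-y=\lambda\in\La\smm\{0\}$. Since $x,y\in L$, the vector $\lambda$ is parallel to $L$, so after an affine change of coordinates I may assume $L=\RR$ and that $\lambda>0$ is a positive real. Writing $I=(p,q)$ with $p,q\in B$, the relations $x,y\in(p,q)$ and $x-y=\lambda$ force $0<\lambda<q-p$; in particular both $p+\lambda$ and $q-\lambda$ lie in $I$.

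Next I would record the structural facts about $B$. Because $E$ is a simply-connected continuum it lies in a simply-connected domain $U$ that is evenly covered by $\pi$, so $E$ lifts homeomorphically into one sheet; hence each component of $\pi^{-1}(E)$ is compact, and $B$ is a bounded continuum. Consequently $B+k\lambda\neq B$ for every $k\in\ZZ\smm\{0\}$, and since all the $B+k\lambda$ are components of $\pi^{-1}(E)$ they are pairwise disjoint. Set $[\alpha,\beta]=\operatorname{Re}(B)$ and $[\gamma,\delta]=\operatorname{Im}(B)$; since the bounded gap $I$ exists, $B$ is not contained in $\RR$, so $\gamma<\delta$. Translation by the real number $\lambda$ leaves imaginary parts unchanged, so every translate $B+k\lambda$ has the same imaginary range $[\gamma,\delta]$ and meets both of the lines $\operatorname{Im}=\gamma$ and $\operatorname{Im}=\delta$, while its real range is the shifted interval $[\alpha+k\lambda,\beta+k\lambda]$.

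The heart of the argument is a planar separation (crossing) lemma, which I expect to be the one genuinely topological point and the main obstacle to state cleanly. Fix a large rectangle $R=[-N,N]\times[\gamma,\delta]$ containing $B$, $B+\lambda$ and $B-\lambda$. Then $B$ is a continuum meeting the top and bottom edges of $R$, so it separates the two vertical edges: the left and right edges of $R$ lie in different components $S_L,S_R$ of $R\smm B$. This is exactly the classical fact that in a rectangle a continuum meeting both horizontal sides must meet every connected set joining the two vertical sides. Granting it, the half-rectangle $\{\operatorname{Re}>\beta\}\cap R$ is a connected subset of $R\smm B$ meeting the right edge, hence contained in $S_R$, and it contains the point of $B+\lambda$ of maximal real part; since $B+\lambda$ is connected and disjoint from $B$, this gives $B+\lambda\subset S_R$. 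Symmetrically $B-\lambda\subset S_L$.

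Finally I would combine these observations. The point $q-\lambda$ lies in $B-\lambda\subset S_L$ and also in $I$, so $I$ meets $S_L$; as $I$ is connected and disjoint from $B$, this forces $I\subset S_L$. But $p+\lambda\in I$, so $p+\lambda\in S_L$, while $p+\lambda\in B+\lambda\subset S_R$. Since $S_L\cap S_R=\emptyset$, this is a contradiction, which completes the proof. The remaining work is routine bookkeeping: checking that $N$ may be chosen so the three translates sit inside $R$, that $p+\lambda,q-\lambda\in I$, and that the relevant sets are connected and avoid $B$. The essential content is the separation lemma together with the observation that the barrier $B$ throws $B+\lambda$ strictly to its right and $B-\lambda$ strictly to its left, while the gap $I$ is pinned to the left side yet is forced to contain a point of $B+\lambda$.
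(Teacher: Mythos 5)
Your proof is correct, and it takes a genuinely different route from the paper's. Both arguments rest on the same two consequences of the torus definition of simple connectivity: each component $B$ of $\pi^{-1}(E)$ is a compact homeomorphic lift of $E$ (the simply-connected domain $U\supset E$ from the definition is evenly covered by $\pi$), and distinct lattice translates of $B$ are pairwise disjoint. From there you argue globally: after rotating so that $L=\RR$ and $\la=x-y>0$, you enclose $B$ and $B\pm\la$ in a rectangle $R$ whose height is exactly the imaginary extent of $B$, invoke the classical crossing lemma to conclude that $B$ separates the left edge of $R$ from the right edge, push $B+\la$ entirely into the right component and $B-\la$ entirely into the left component, and get a contradiction because the gap $I=(p,q)$ is connected, avoids $B$, and yet contains both $q-\la\in B-\la$ and $p+\la\in B+\la$. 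The paper instead argues locally around $I$ and uses compactness of $B$ only through the disjointness of translates: it takes the two components of $\CC\smm(L\cup B)$ whose boundaries contain $I$, proves one of them is bounded by a Jordan-curve separation argument, orders the components $G$ of the intersection of $B$ with that domain's boundary by their nested hull intervals $H(G)\subset L$, selects a maximal $G_0$ with $H(G_0)\supset I$, and shows that $G_0$ must intersect its lattice translate $T^n(G_0)$ because the two endpoint pairs interlink on $L$ while both continua lie in one closed half-plane---again contradicting disjointness of distinct components of $\pi^{-1}(E)$. So both proofs bottom out in a classical crossing/linking fact of plane topology, yours in a rectangle and the paper's in a half-plane; yours is the more economical and quantitative, at the price of leaning harder on compactness of $B$, while the paper's avoids any bounding box and needs only connectedness plus disjointness of translates. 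The one step you leave as a black box, the rectangle crossing lemma, is indeed classical (a standard consequence of Janiszewski's theorem, or of Hex-type arguments), so granting it is a level of rigor comparable to the paper's own unproved appeals to plane separation.
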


\begin{proof}
Let $I$ be a bounded component of $L\smm B$. Then there are exactly two components $U,V$ of $\CC\smm(L\cup B)$ such that their boundaries contain the interval $I$. We claim that at least one of them, denoted it by $U$, is bounded. Otherwise one may find a Jordan curve $\gamma$ in $U\cup V\cup I$ such that $\gamma$ separates the two endpoints $x_1$ and $y_1$ of $I$. Since $\gamma$ is disjoint from $B$, and both $x_1$ and $y_1$ are contained in $B$, this contradicts the fact that $B$ is connected.

Assume by contradiction that $\pi$ is not injective on $I$, i.e. there exist two distinct points $x,y\in I$ such that $\pi(x)=\pi(y)$. For each connected component $G$ of $B\cap\partial U$, the set $G\cap L$ is non-empty. Denote by $H(G)$ the closed convex hull of $G\cap L$, i.e. $H(G)$ is the minimal closed interval in $L$ with $H(G)\supset G\cap L$. Then for any two components $G_1, G_2$ of $B\cap\partial U$, $H(G_1)$ and $H(G_2)$ are either disjoint or one contains another. In particular, there exists a component $G_0$ of $B\cap\partial U$ such that $H(G_0)\supset H(G)$ for any component $G$ of $B\cap\partial U$. Moreover, $H(G_0)\supset I$.

\begin{figure}[htbp]\centering
\includegraphics[width=8cm]{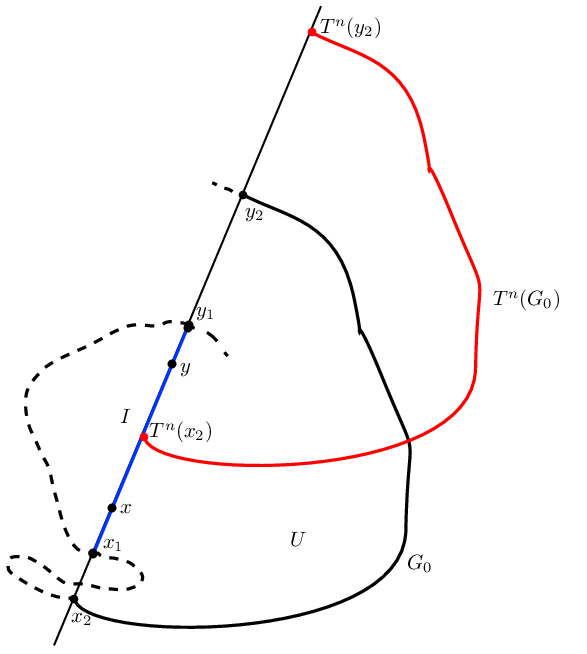}
\begin{center}{\sf Figure 1. Lifting of a full continuum.}
\end{center}
\end{figure}

Set $T(z)=z+(y-x)$. Let $x_2$ and $y_2$ be the two endpoints of $H(G_0)$. Then there exists an integer $n$ such that $T^n(x_2)\in [x,y]$ and hence $T^n(y_2)\notin I$. Let $\HH$ be the component of $\CC\smm L$ that contains $U$, then $T^n(G_0)$ is a continuum in $\HH\cup L$ joining $T^n(x_2)$ with $T^n(y_2)$, whereas $G_0$ is a continuum in $\HH\cup L$ joining $x_2$ with $y_2$. Thus $G_0$ must intersect $T^n(G_0)$.

On the other hand, since $\pi(x)=\pi(y)$, we have $(x-y)\in\Lambda$. Thus $T^n(z)=z\mod\Lambda$ and $T^n(B)$ is another component of $\pi^{-1}(E)$ and hence is disjoint from $B$. This contradicts the facts that $G_0\subset B$ and $G_0$ intersects $T^n(G_0)$.
\end{proof}

\begin{lemma}\label{segment}
Let $A(z)=az+b\,(\m\Lambda): \CC/\Lambda\to\CC/\Lambda$ be a covering with $\deg A\ge 2$ and $E\subset\CC/\Lambda$ be an always full wandering continuum. Then $E$ must be a line segment.
\end{lemma}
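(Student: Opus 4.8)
The plan is to lift everything to the universal cover $\CC$, study a single lift of $E$, and reduce the statement to a purely lattice-dynamical fact. Throughout, write $\alpha(z)=az+b:\CC\to\CC$ for the lift of $A$, so that $\pi\circ\alpha=A\circ\pi$ and $\alpha^n(x)-\alpha^n(y)=a^n(x-y)$; recall also that $a\Lambda\subset\Lambda$ with $[\Lambda:a\Lambda]=\deg A\ge 2$. First I would reduce to the case that $A^n(E)$ is simply-connected for \emph{every} $n\ge 0$. Since $E$ is eventually simply-connected there is an $N$ with $A^n(A^N(E))$ simply-connected for all $n\ge 0$, and it suffices to treat $A^N(E)$: if $A^N(E)$ turns out to be a geodesic segment, then so is $E$, because for a component $B$ of $\pi^{-1}(E)$ the connected set $\alpha^N(B)$ lies in one component of $\pi^{-1}(A^N(E))$, i.e.\ inside a straight segment, whence $B\subset\alpha^{-N}(\text{segment})$ is itself contained in a line and, being connected, is a segment; then $E=\pi(B)$ is a geodesic segment.

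So assume $A^n(E)$ is simply-connected for all $n\ge 0$ and fix a component $B_0$ of $\pi^{-1}(E)$. The key input from simple-connectivity is that $\pi$ is \emph{injective} on each $\alpha^n(B_0)$: indeed $A^n(E)$ lies in a simply-connected domain over which the covering $\pi$ is trivial, so every component of $\pi^{-1}(A^n(E))$ — in particular the one containing the connected lift $\alpha^n(B_0)$ — maps homeomorphically onto $A^n(E)$. (Note that this step does not yet use Lemma~\ref{topology}.) I would then record the consequence for the difference set $\Delta:=B_0-B_0$, a symmetric sub-continuum of $\CC$ containing $0$. Injectivity of $\pi$ on $\alpha^n(B_0)$ for all $n$ is, via $\alpha^n(x)-\alpha^n(y)=a^n(x-y)$, exactly the condition
$$
a^n\Delta\cap\Lambda=\{0\}\qquad\text{for all } n\ge 0 .
$$

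Now I would exploit the expansion. Because $[\Lambda:a\Lambda]=\deg A\ge 2$, the refined lattices $a^{-n}\Lambda$ have mesh tending to $0$, so the set $S_0=\bigcup_{n\ge 0}\bigl(a^{-n}\Lambda/\Lambda\bigr)$ is \emph{dense} in the torus $\CC/\Lambda$. The displayed condition says precisely that $\pi(\Delta)$ meets $S_0$ only at $0$; since $S_0$ is dense, $\pi(\Delta)$, and hence $\Delta$ itself, has empty interior. To finish I would invoke the topology of plane continua: a continuum not contained in a line has difference set with nonempty interior; contrapositively, the empty interior of $\Delta$ forces $B_0$ into a line, and a connected subset of a line is an interval, so $B_0$ is a straight segment and $E=\pi(B_0)$ is a geodesic segment.

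The hard part will be this last topological step, namely that a plane continuum not lying in a line has a difference set of nonempty interior. One cheap half is immediate and worth isolating: if $B_0$ is not contained in a line then its width is positive in every direction, so the support function of $\Delta$ is everywhere positive and $0\in\operatorname{int}\operatorname{conv}(\Delta)$; the genuine obstacle is upgrading this from the convex hull to $\Delta$ itself (a general continuum could in principle be thin and badly non-rectifiable). This is exactly the point at which I expect Lemma~\ref{topology} to be the decisive tool: a bend in $B_0$ produces a bounded complementary interval $I$ on a line, whose $\alpha^n$-images remain bounded complementary intervals of the expanding lifts, so Lemma~\ref{topology} constrains how $B_0$ can fail to be straight and supplies the rigorous plane-topology needed to convert ``$\Delta$ has empty interior'' into ``$B_0$ lies in a line.''
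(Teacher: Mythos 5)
Your reductions are fine up to a point: the passage to the case where every $A^n(E)$ is simply-connected, the observation that $\pi$ is then injective on each $\alpha^n(B_0)$ (each component of the preimage of a simply-connected continuum is a homeomorphic lift), and the conclusion that $a^n\Delta\cap\Lambda=\{0\}$ for all $n$, hence that $\Delta=B_0-B_0$ misses the dense set $\bigcup_{n\ge 0}a^{-n}\Lambda$ except at $0$ and so has empty interior --- all of this is correct, and the density mechanism is genuinely the same one the paper uses (the paper works with $\Gamma_n=z_0+a^{-n}\Lambda$). But the proof then stops exactly where the lemma's actual content begins. The statement you need --- \emph{a planar continuum not contained in a line has difference set with nonempty interior} --- is asserted, flagged as ``the hard part,'' and never proved. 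This is not a routine plane-topology step. It is easy for graphs of continuous functions (slices of $\Delta$ over a base direction are intervals by connectedness, and empty interior then forces $f(s+u)-f(s)$ to be independent of $s$, i.e.\ Cauchy's equation, i.e.\ $f$ affine), but a general continuum need not be a graph over any line, need not be path-connected, and can be indecomposable, so the slice argument collapses; proving the claim in that generality is essentially as hard as the lemma itself, and you give no proof and cite no reference.

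Moreover, the hope that Lemma \ref{topology} will supply this step is misdirected, because your reduction has discarded precisely the information that Lemma \ref{topology} acts on. The difference set $\Delta$ only records pairs of points \emph{on} $B_0$, while Lemma \ref{topology} is a statement about pairs of $\pi$-identified points lying in a bounded complementary interval of a line, i.e.\ points \emph{off} the continuum, inside a ``pocket.'' The paper's proof exploits exactly that: assuming $B$ is not a segment, it either finds $B$ convex (then $E$ has interior, impossible since the expanding images $A^n(E)$ stay simply-connected and wander) or produces a bounded component $U$ of $\CC\smm(L\cup B)$; density of $\Gamma_n$ then places two points $x_n,y_n\in\Gamma_n$ \emph{inside the pocket} $U$ on a line segment with ends on $B$, and applying $\alpha^n$ yields two points with $\pi(x)=\pi(y)$ in a bounded component of $\alpha^n(L_n)\smm\alpha^n(B)$, contradicting Lemma \ref{topology}. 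No statement about $\Delta$ is ever needed. So as written, your argument proves only ``$\Delta$ has empty interior,'' which by itself does not yield collinearity of $B_0$ (compact symmetric continua through $0$ with all projections nondegenerate can have empty interior --- a circle together with a diameter, for instance --- so the conclusion really must use that $\Delta$ is a difference set, and that is the unproved claim). A smaller, fixable issue: in your reduction you use $\pi(B)=E$, i.e.\ that a component of $\pi^{-1}(E)$ surjects onto $E$; this is false for general continua under coverings, though here it can be repaired by arguing instead that $E$ is a subcontinuum of $A^{-N}$ of a geodesic segment.
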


\begin{proof} Let $B$ be a component of $\pi^{-1}(E)$. Assume by contradiction that $B$ is not a line segment. We claim that there exists a line $L\subset\CC$ such that $L\smm B$ has a bounded component $I$. Otherwise, each line segment joining two points in $B$ must be contained in $B$. Thus $B$ is convex and hence has positive measure since it is not a line segment. This is impossible since $A$ is expanding and $E$ is wandering.

As in the proof of Lemma \ref{topology}, there exists a bounded component $U$ of $\CC\smm(L\cup B)$ such that $I\subset\partial U$. Since $\deg A\ge 2$, we have $a\neq 1$. Thus the map $\alpha(z)=az+b:\, \CC\to\CC$ has a unique fixed point $z_0\in\CC$. Denote by $\Gamma_0=\{n+m\omega+z_0: n,m\in\ZZ\}$ and $\Gamma_n=\alpha^{-n}(\Gamma_0)$. Then there exists two distinct points $x_n, y_n\in U\cap\Gamma_n$ for some integer $n\ge 0$ such that for the line $L_n$ that passes through the points $x_n, y_n$, the set $L_n\cap U$ has a component $I_n$ which contains both $x_n$ and $y_n$, and the two endpoints of $I_n$ are contained in $B$.

\begin{figure}[htbp]\centering
\includegraphics[width=9cm]{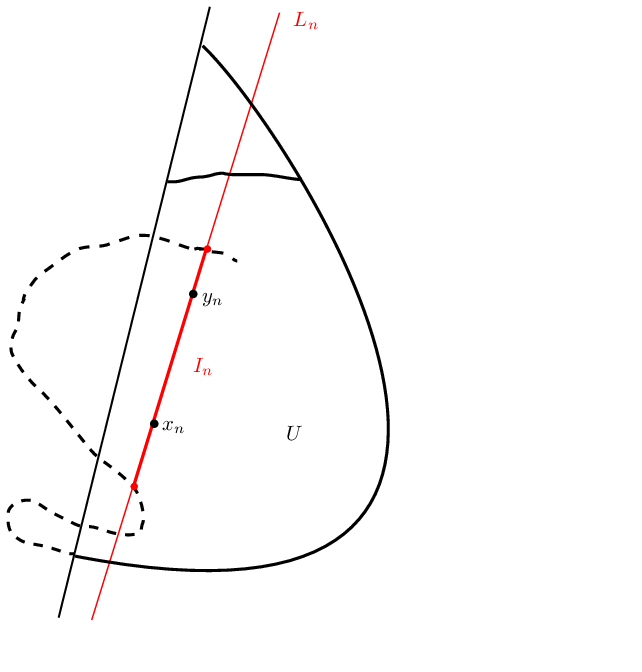}
\begin{center}{\sf Figure 2. A wandering continuum is a line segment. }
\end{center}
\end{figure}

Now consider the full continuum $\alpha^n(B)$ and the line $\alpha^n(L_n)$. The set $\alpha^n(L_n)\smm\alpha^n(B)$ has a component $\alpha^n(I_n)$, which contains $x=\alpha^n(x_n)$ and $y=\alpha^n(y_n)$. Since $x_n,y_n\in\Gamma_n$, we have $x,y\in\Gamma_0$ and hence $\pi(x)=\pi(y)$. This
contradicts Lemma \ref{topology}.
\end{proof}

\begin{lemma}\label{non}
Let $A(z)=az+b\,(\m\Lambda): \CC/\Lambda\to\CC/\Lambda$ be a torus covering with $\deg A\ge 2$. If $a$ is not real, then any line segment in $\CC/\Lambda$ is not wandering.
\end{lemma}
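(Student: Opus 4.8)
The plan is to assume, for contradiction, that some line segment $\sigma\subset\CC/\Lambda$ is wandering, and then to exhibit an $n$ with $A^{n+1}(\sigma)\cap A^n(\sigma)\neq\emptyset$. First I would lift: choose a lift $\tilde\sigma\subset\CC$ of $\sigma$, an honest segment of length $\ell>0$ in some direction $e^{i\phi_0}$. Since $A\circ\pi=\pi\circ\alpha$ with $\alpha(z)=az+b$, we have $A^n(\sigma)=\pi(\alpha^n(\tilde\sigma))$, and $\alpha^n(\tilde\sigma)$ is a segment of length $L_n=|a|^n\ell$ in direction $\phi_n=\phi_0+n\arg a$. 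Two facts drive the argument. Because $\deg A=|a|^2=d\ge 2$, we have $|a|=\sqrt d>1$ and hence $L_n\to\infty$. Because $a$ is not real, $\arg a\not\equiv 0\pmod\pi$, so the directions of $\alpha^n(\tilde\sigma)$ and $\alpha^{n+1}(\tilde\sigma)$ differ, as lines, by a fixed transversality angle $\psi$ with $\sin\psi=|\sin(\arg a)|>0$, independent of $n$.

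The key step is a crossing criterion phrased on the universal cover. For two segments $\tilde S=\{p+s\,e_1:s\in[0,L]\}$ and $\tilde S'=\{p'+t\,e_2:t\in[0,L']\}$ in $\CC$, the projected arcs $\pi(\tilde S)$ and $\pi(\tilde S')$ meet on $\CC/\Lambda$ if and only if some $\Lambda$-translate of $\tilde S'$ meets $\tilde S$, i.e. if and only if $(p-p')+s\,e_1-t\,e_2\in\Lambda$ for some admissible $s,t$. Equivalently, the parallelogram $\Pi$ based at $p-p'$ with edge vectors $L\,e_1$ and $-L'\,e_2$ must contain a point of $\Lambda$. Thus intersection on the torus is converted into the purely geometric question of whether a large parallelogram captures a lattice point.

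To finish, I would apply this with $\tilde S=\alpha^n(\tilde\sigma)$ and $\tilde S'=\alpha^{n+1}(\tilde\sigma)$. The resulting parallelogram has edges of lengths $L_n$ and $L_{n+1}$ meeting at angle $\psi$, so the distances between its two pairs of opposite sides are $L_n\sin\psi$ and $L_{n+1}\sin\psi$; its minimal width is $L_n\sin\psi$, and it therefore contains a round disk of radius $\tfrac12 L_n\sin\psi$ centred at its centre. Since $L_n\to\infty$ and $\psi$ is a fixed positive angle, for all large $n$ this radius exceeds the covering radius of $\Lambda$, so the disk, and hence $\Pi$, contains a point of $\Lambda$. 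That yields $A^{n+1}(\sigma)\cap A^n(\sigma)\neq\emptyset$, contradicting that $\sigma$ is wandering.

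The step I expect to be the main obstacle, and the reason the parallelogram reformulation is worth isolating, is proving a crossing statement that is uniform over all directions. A tempting shortcut, that a sufficiently long geodesic arc is almost dense and so must meet the other arc, fails precisely when $\phi_n$ or $\phi_{n+1}$ is a closed-geodesic (rational) direction: such an arc, however long, retraces a thin union of parallel circles and is nowhere dense. The lattice-point-in-a-parallelogram formulation sidesteps this entirely, since its only inputs are the two lengths and the fixed angle $\psi$, with no distinction between rational and irrational directions; the non-reality of $a$ enters solely through the uniform lower bound $\sin\psi>0$ on the transversality of consecutive images.
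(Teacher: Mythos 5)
Your proof is correct and is essentially the paper's argument: both reach a contradiction by showing that $A^n(\sigma)$ and $A^{n+1}(\sigma)$ must intersect once the lifted length $|a|^n\ell$ is large, the two driving facts in each case being $|a|=\sqrt{d}>1$ and the fixed transversality angle with $|\sin(\arg a)|>0$. The only difference is how the forced intersection mod $\Lambda$ is certified: the paper picks lifts $B_n$, $B_{n+1}$ whose midpoints lie in a common fundamental parallelogram and deduces that disjointness would force the length bound $|B_n|\le 2(1+|\omega|)/|\sin\theta|$, whereas you encode intersection on the torus as a lattice point in the difference parallelogram and invoke the covering radius of $\Lambda$ --- equivalent ways of finishing the same argument.
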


\begin{proof}
Let $E\subset\CC/\Lambda$ be a line segment. We want to show that there exists an integer $n>0$ such that $A^n(E)$ intersects $A^{n+1}(E)$.

Let $R$ be the full parallelogram with vertices $0,1,\omega$ and $1+\omega$. Then $R$ is a fundamental domain of the group $\Lambda$. Thus for any $n\ge 0$, the set $\pi^{-1}(A^n(E))$ has a component $B_n$ such that the midpoint $m(B_n)$ of the line segment $B_n$ is contained in the closure of $R$. Since the diameter of $R$ is less than $1+|\omega|$, for any $n\ge 0$, the Euclidean distance
\begin{equation}\label{4}
|m(B_n)-m(B_{n+1})|\le 1+|\omega|.
\end{equation}

Denote by $a=|a|\exp(i\theta)$. Then $0<|\theta|<\pi$ since $a$ is not real. Let $L_n$ be the line containing $B_n$ for $n\ge 0$. Then $L_n$ and $L_{n+1}$ must intersect at a point $O_n$ and the angle formed by these two lines is $|\theta|$. If $B_n$ is disjoint from $B_{n+1}$, then $O_n\notin B_n$ or $O_n\notin B_{n+1}$. In the former case, we have
$$
|O_n-m(B_n|\ge \frac{|B_n|}{2},
$$
where $|B_n|$ is the length of $B_n$. Therefore the Euclidean distance from $m(B_n)$ to $L_{n+1}$ satisfies
$$
d(m(B_n), L_{n+1})\ge\frac{|B_n|}{2}\sin|\theta|.
$$
It follows that
$$
1+|\omega|\ge |m(B_n)-m(B_{n+1})|\ge d(m(B_n), L_{n+1})\ge\frac{|B_n|}{2}\sin|\theta|.
$$
So
\begin{equation}\label{5}
|B_{n}|\le\frac{2(1+|\omega|)}{|\sin\theta|}.
\end{equation}

\begin{figure}[htbp]\centering
\includegraphics[width=8cm]{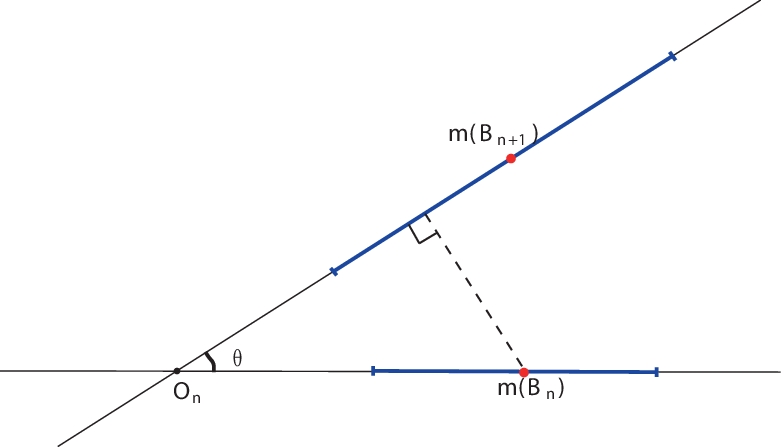}
\begin{center}{\sf Figure 3. The upper bound of the length.}
\end{center}
\end{figure}

In the latter case, we have:
\begin{equation}\label{6}
|B_{n+1}|\le\frac{2(1+|\omega|)}{|\sin\theta|}.
\end{equation}
Noticing that $\deg A=|a|^2\ge 2$, we have $|B_n|=|a|^n|B_0|\to\infty$ as $n\to\infty$. Thus both cases are impossible.
\end{proof}

Now suppose that $A(z)=az+b\,(\m\Lambda): \CC/\Lambda\to\CC/\Lambda$ is a covering with $\deg A\ge 2$ and $a$ is an integer. Let $L\subset\CC$ be a line. Then either $\pi(L)$ is a Jordan curve on $\CC/\Lambda$ or $\pi$ is injective on $L$. Write $\alpha(z)=az+b$. Then for any $n, m\ge 0$, $\alpha^n(L)$ and $\alpha^m(L)$ either coincide or are parallel. Thus if $\pi$ is injective on $L$, then $\pi(L)$ is either eventually periodic or a {\bf wandering line}, i.e. $A^n(\pi(L))\cap A^m(\pi(L))=\emptyset$ for any $n>m\ge 0$.

\begin{lemma}\label{flexible}
Let $L\subset\CC$ be a line and $B\subset L$ be a line segment.

(a) If $\pi(L)$ is a Jordan curve, then $A^n(\pi(B))$ is not full when $n$ is large enough.

(b) If $\pi(L)$ is a wandering line, then $\pi(B)$ is a wandering continuum.

(c) If $\pi(L)$ is an eventually periodic line, then there exists a line segment $B_0\subset B$ such that $\pi(B_0)$ is a wandering continuum.
\end{lemma}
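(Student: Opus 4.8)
The plan is to build everything on one structural fact: since $a$ is a real integer, the affine lift $\alpha(z)=az+b$ scales by the real scalar $a$ and therefore carries every line $L\subset\CC$ to a parallel line $\alpha(L)$. Consequently, for each $n$ the image $A^n(\pi(L))=\pi(\alpha^n(L))$ is a geodesic in the same direction as $\pi(L)$, while the segment grows at a fixed exponential rate, $|\alpha^n(B)|=|a|^n|B|\to\infty$. A given direction is either rational (a lattice direction, giving a Jordan curve) or irrational (giving an embedded infinite geodesic), and in both cases two geodesics of that direction either coincide or are disjoint; hence the sequence $\gamma_n:=A^n(\pi(L))$ is either pairwise distinct---so pairwise disjoint---or eventually periodic, which is exactly the dichotomy quoted before the lemma.

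For (a), suppose $\pi(L)$ is a Jordan curve, i.e. $L$ is parallel to a primitive $e\in\Lambda$. Every $\gamma_n$ is then a closed geodesic of the fixed length $|e|$, whereas $|A^n(\pi(B))|=|a|^n|B|\to\infty$; so once $|a|^n|B|>|e|$ the arc $A^n(\pi(B))$ wraps all the way around and equals the whole essential closed geodesic $\gamma_n$. From $A^n(\pi(B))=\gamma_n$ one gets $A^{n+1}(\pi(B))=A(\gamma_n)=\gamma_{n+1}$, so for all large $n$ the images are full closed geodesics; these are permuted among finitely many parallel closed geodesics by the induced expanding degree-$|a|$ self-map of the transverse circle $\RR/\ZZ$, so once this transverse orbit revisits a value two wrapped images coincide and $\pi(B)$ is not wandering. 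In every case the wrapped image, being an essential closed geodesic, is not contained in a disc and hence is not a simply-connected continuum, so $\pi(B)$ cannot be an eventually simply-connected wandering continuum.

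For (b), let $\pi(L)$ be a wandering infinite geodesic, so the $\gamma_n$ are pairwise distinct and therefore pairwise disjoint. Since $A^n(\pi(B))\subset\gamma_n$, the arcs $A^n(\pi(B))$ are pairwise disjoint, so $\pi(B)$ is a wandering continuum; and because $\pi$ is injective on each $\alpha^n(L)$, each $A^n(\pi(B))$ is an embedded arc, hence a simply-connected continuum, so $\pi(B)$ is simply-connected at every step.

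For (c), eventual periodicity means $\gamma_{N+p}=\gamma_N$ for some minimal $N\ge0$, $p\ge1$; the $N+p$ geodesics $\gamma_0,\dots,\gamma_{N+p-1}$ are then distinct, and the first-return map $A^p$ on the periodic geodesic $\gamma_N$ lifts to an affine expansion of the line $\cong\RR$ by $|a|^p>1$ with a repelling fixed point. The idea is to pick inside the arc $A^N(\pi(B))\subset\gamma_N$ a short sub-arc $S$ lying off the fixed point and short enough that $A^p(S)\cap S=\emptyset$; then the iterates $A^{kp}(S)$ march monotonically off along $\gamma_N$ and are pairwise disjoint, while the intermediate images sit on the other, disjoint, geodesics of the cycle, so $S$ is wandering. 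Pulling $S$ back through the injective map $A^N|_{\pi(L)}$ to the corresponding sub-segment $B_0\subset B$ gives $A^N(\pi(B_0))=S$, and the same bookkeeping across the $N+p$ distinct geodesics shows $\pi(B_0)$ is the required wandering continuum. The main obstacle is precisely this last part: one must produce the wandering segment \emph{inside the prescribed $B$}, which forces one to analyse the expanding first-return dynamics on the periodic geodesic, choose $S$ with the correct length-to-position ratio so that \emph{all} forward images (not merely consecutive ones) stay disjoint throughout the $p$-cycle, and then transport it back by the covering $A^N$.
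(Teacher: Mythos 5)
The paper gives no proof of this lemma at all (it is declared ``easy to check'' and omitted), so your proposal can only be judged on its own merits; your structural setup is the right one --- since $a\in\ZZ$, every $\alpha^n(L)$ is parallel to $L$, so the geodesics $\gamma_n=A^n(\pi(L))$ pairwise either coincide or are disjoint --- but part (a) contains a step that is genuinely false. The wrapped images are \emph{not} ``permuted among finitely many parallel closed geodesics'': the closed geodesics parallel to the primitive vector $e$ form a whole circle, on which $A$ induces the expanding map $t\mapsto at+c \pmod 1$, and the transverse coordinate of $L$ need never revisit a value (it does so only for countably many heights). In fact Lemma \ref{flexible}(a) is false as literally stated: take $\Lambda=\ZZ+\ZZ i$, $A(z)=2z$, and $L=\RR+i\theta$ with $\theta$ irrational. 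Then $\pi(L)$ is a Jordan curve, yet $A^n(\pi(B))$ lies in the horizontal closed geodesic at height $2^n\theta \bmod 1$, and $2^n\theta\equiv 2^m\theta \pmod 1$ with $n>m$ would force $2^m(2^{n-m}-1)\theta\in\ZZ$, which is impossible; so all iterates are pairwise disjoint and $\pi(B)$ \emph{is} a wandering continuum. Hence the ``revisiting'' you invoke cannot be established by any argument, and the conclusion ``$\pi(B)$ is not wandering'' is unprovable. What saves your write-up is its closing sentence: once $|a|^n|B|>|e|$, the image $A^n(\pi(B))$ equals the essential Jordan curve $\gamma_n$, which is not a simply-connected continuum, so $\pi(B)$ is never an \emph{eventually simply-connected} wandering continuum. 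That weaker statement is correct, and it is all that the proof of Theorem \ref{torus} actually needs from (a) (it is used only to rule out the wandering continuum lying in a closed geodesic). So in effect you have corrected the lemma rather than proved it; part (a) should be restated in that weaker form and your ``not wandering'' argument deleted.

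Parts (b) and (c) are essentially correct. For (b), distinctness of the parallel geodesics $\gamma_n$ gives their disjointness, hence disjointness of the sub-arcs $A^n(\pi(B))$, and injectivity of $\pi$ on each $\alpha^n(L)$ makes every image an embedded arc, hence simply-connected. For (c), the expanding-return-map argument works, and the details you flag can indeed be filled in: writing $\alpha^p(L_N)=L_N+\lambda$ with $\lambda\in\Lambda$, the return map $A^p|_{\gamma_N}$ is conjugate via the injective $\pi|_{L_N}$ to an affine expansion of $\RR$ with a unique fixed point $w_0$, and a sub-segment $S$ avoiding $w_0$ and short enough has pairwise disjoint images $(A^p)^k(S)$ (when $a^p<0$ consecutive disjointness is automatic because images alternate sides of $w_0$, and the correct smallness condition is $A^{2p}(S)\cap S=\emptyset$). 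The one point your ``same bookkeeping'' glosses over is how disjointness propagates to the other geodesics of the cycle: since $A^i$ restricted to $\gamma_N$ is injective (lift to lines) and commutes with $A^p$, the sets $(A^p)^k(A^i(S))=A^i((A^p)^k(S))$ are automatically pairwise disjoint, and the fixed points of the return maps on $\gamma_{N+i}$ are exactly the points $A^i(w_0)$, so avoiding $w_0$ alone suffices. With those two remarks, your (c) is complete.
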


\begin{proof}
(a) Since $\pi(L)$ is a Jordan curve, there exist two distinct points $x, y\in L$ such that $\pi(x)=\pi(y)$. Since $\deg A=|a|^2\ge 2$, there exists an integer $n_0>0$ such that the Euclidean length $|\alpha^n(B)|\ge |x-y|$ when $n\ge n_0$. Thus $A^n(\pi(B))=\pi(\alpha^n(B))=\pi(L)$, which is a  Jordan curve, since $a$ is real.

(b) This is obviously.

(c) Assume that $\pi(L)$ is periodic with period $p\ge 1$ for simplicity. Since $\deg A\ge 2$, there exists a unique point $x_0\in L$ such that $A^p(\pi(x_0))=\pi(x_0)$. Pick a point $y_0$ in the interior of $B$ with $y_0\neq x_0$. Then for any $n\ge 1$, there exists a unique point $y_n\in L$ such that $A^{np}(\pi(y_0))=\pi(y_n)$. Moreover, $y_n\to\infty$ as $n\to\infty$.

Suppose that the integer $a$ is positive. Then all points $y_n$ are contained in the same component of $L\smm\{x_0\}$. Since $y_0$ is contained in the interior of $B$, there exists a closed line segment $B_0\subset B$ such that $B_0\subset (y_0, y_1)$. Then $\pi(B_0)$ is a wandering continuum.

Now suppose that the integer $a$ is negative. Then the points $y_{2k}$ are contained in the same component of $L\smm\{x_0\}$ for $k\ge 0$. Since $y_0$ is contained in the interior of $B$, there exists a closed line segment $B_0\subset B$ such that $B_0\subset (y_0, y_2)$. Then $\pi(B_0)$ is a wandering continuum for $A$.
\end{proof}

\vskip 0.24cm

{\noindent\it Proof of Theorem \ref{torus}}. The result follows directly from Lemmas \ref{topology}, \ref{segment}, \ref{non} and \ref {flexible}. \qed

\section{Proof of Theorem \ref{continuum}}

\noindent{\it Proof of Theorem \ref{continuum}}. Let $f$ be a Latt\`{e}s map. By Theorem \ref{lift}, there exist a lattice $\Lambda=\{n+m\omega,\, n,m\in\ZZ\}$ with $\text{Im }\omega>0$, a finite holomorphic cover $\Theta: \CC/\Lambda\to\OOO_f$, a finite cyclic group $G$ of order $\nu(\OOO_f)$ generated by a conformal self-map $\rho$ of $\CC/\Lambda$ with fixed points, and an affine map $A(z)=az+b\,(\m\Lambda): \CC/\Lambda\to\CC/\Lambda$, such that
$$
\Theta(z_1)=\Theta(z_2)\Leftrightarrow z_1=\rho^n(z_2)\text{ for } n\in\ZZ,
$$
and the following diagram commutes:
$$
\begin{array}{ccc}
\CC/\Lambda & \overset{A}{\longrightarrow} & \CC/\Lambda  \\
\Theta \Big\downarrow & & \Big \downarrow \Theta  \\
\OOO_f & \overset{f}{\longrightarrow} & \OOO_f.
\end{array}
$$

Suppose that $K$ is an always full wandering continuum of the map $f$. Then for each $n\ge 0$, every component of $\Theta^{-1}(f^n(K))$ is a full continuum in $\CC/\Lambda$ since $f^n(K)$ is disjoint from $P_f$. Let $E$ be a component of $\Theta^{-1}(K)$. It is an always full wandering continuum for the map $A$. Therefore the derivative $a$ is an integer and $K$ is a line segment in an infinite geodesic under the flat metric on $\CC/\Lambda$ by Theorem \ref{torus}.

Let $E_n$ be a component of $\Theta^{-1}(f^n(K))$. Then $\rho(E_n)$ is also a component of $\Theta^{-1}(f^n(K))$, where $\rho$ is the generator of the group $G$. Let $R$ be the full parallelogram with vertices $0,1,\omega$ and $1+\omega$. Then $R$ is a fundamental domain of the group $\Lambda$. Thus there are components $I_n$ and $J_n$ of $\pi^{-1}(E_n)$ and $\pi^{-1}(\rho(E_n))$ respectively, such that the midpoints of $I_n$ and $J_n$ are contained in $R$.

Assume that $\#P_f=3$. Let $\wt\rho$ be a lift of the map $\rho$ under the projection $\pi$. Then $\wt\rho$ is a rotation around its fixed point with angle $(2\pi)/\nu$, where $\nu=3,4$ or $6$. Thus the angle formed by the two lines containing $I_n$ and $J_n$ respectively, is $(2\pi)/\nu$. As in the proof of Lemma \ref{non}, we have:
$$
|I_n|\le\frac{2(1+|\omega|)}{\sin(\pi/3)},
$$
where $|I_n|$ is the length of $I_n$. This leads to a contradiction since $|I_n|=|a|^n|I_0|\to\infty$ as $n\to\infty$.
Therefore $\#P_f=4$ and $f$ is a flexible Latt\`{e}s map.

Conversely, suppose that the map $f$ is flexible. Denote by $Q$ the set of fixed points of $\rho$. Then
$A(Q)\subset Q$ since $f(P_f)\subset P_f$.

Let $L\subset\CC$ be a line. If $L$ passes through at least two points $z_1, z_2\in\pi^{-1}(Q)$, then it passes through the point $(z_2+(z_2-z_1))$. Note that $z_2-z_1\equiv 0\,\m(\Lambda/2)$ by (1). So we have $2(z_2-z_1)\equiv 0\,\m\Lambda$, i.e., $\pi(z_2+(z_2-z_1))=\pi(z_1)$. So $\pi(L)$ is a Jordan curve on $\CC/\Lambda$. If $L$ passes through exactly one point $z_0\in\pi^{-1}(Q)$, then $\pi$ is injective on $L$, $\Theta(\pi(L))$ is a ray from a point in $P_f$, and $\Theta:\, \pi(L)\to\Theta(\pi(L))$ is a folding with exactly one fold point at $\pi(z_0)\in Q$. If $L$ is disjoint from $\pi^{-1}(Q)$, then $\Theta\circ\pi$ is injective on $L$.

Suppose that $\pi(L)\subset\CC/\Lambda$ is a wandering line. Then $A^n(\pi(L))$ is disjoint from $Q$ for all $n\ge 0$. Thus $\Theta$ is injective on each line $A^n(\pi(L))$. On the other hand, if $\Theta(A^n(\pi(L)))$ intersects $\Theta(A^{n+p}(\pi(L)))$ for some integer $p>0$, then they coincide since the map $\rho$ in $G$ preserves the slopes of the lines. Thus $A^{n+p}(\pi(L)))=\rho(A^n(\pi(L))$. Therefore
$$
A^{n+2p}(\pi(L))=\rho(A^{n+p}(\pi(L)))
$$
since $A\circ\rho=\rho\circ A$. But $\rho^2$ is the identity. So $A^{n+2p}(\pi(L))=A^{n}(\pi(L))$. This is a contradiction. Therefore $\Theta(A^n(\pi(L)))$ is pairwise disjoint. Thus for any line segment $E\subset\pi(L)$, the set $\Theta(E)$ is an always full wandering continuum for $f$.

Now suppose that $\pi(L)\subset\CC/\Lambda$ is an eventually periodic line with period $p\ge 1$. Then $\Theta(A^n(\pi(L)))$ are either bi-infinite or one-side-infinite geodesics depending on whether $A^n(\pi(L))$ passes through a point in $Q$. Since $\rho^2$ is the identity, either they are disjoint and have the same period, or two of them coincide and the period is $p/2$. Let $E\subset\pi(L)$ be a wandering line segment. In the former case, $\Theta(E)$ is an always full wandering continuum of $f$. In the latter case, there exists a line segment $E_0\subset E$ such that $\Theta(E_0)$ is an always full wandering continuum for $f$. \qed

\newpage
\noindent
Guizhen Cui \\
Academy of Mathematics and Systems Science \\
Chinese Academy of Sciences, Beijing 100190 \\
P. R. China. \\
Email: gzcui@math.ac.cn

\vskip 0.24cm

\noindent
Yan Gao \\
Mathemaitcal School \\
Sichuan University \\
P. R. China \\
Email: gyan@mail.ustc.edu.cn

\end{document}